\documentclass[11pt]{amsart}

\usepackage[a4paper,margin=31mm]{geometry}
\usepackage[T1]{fontenc}
\usepackage[utf8]{inputenc}
\usepackage{lmodern}
\usepackage{microtype}
\usepackage{amsmath,amssymb}
\usepackage{hyperref}

\hypersetup{
  colorlinks=true,
  linkcolor=black,
  citecolor=black,
  urlcolor=black,
  pdftitle={A density deficit for sums of three cube-full numbers},
  pdfsubject={Additive representations by cube-full numbers},
  pdfkeywords={cube-full numbers, powerful numbers, additive representations,
  cubic characters, Chebotarev density theorem}
}

\newtheorem{theorem}{Theorem}[section]
\newtheorem{proposition}[theorem]{Proposition}
\newtheorem{lemma}[theorem]{Lemma}
\newtheorem{corollary}[theorem]{Corollary}

\newcommand{\N}{\mathbb N}
\newcommand{\F}{\mathcal F}
\newcommand{\C}{\mathcal C}
\newcommand{\R}{\mathcal R}
\newcommand{\Fq}{\mathbb F_q}
\newcommand{\vol}{\operatorname{vol}}

\title[A density deficit for three cube-full summands]
{A Density Deficit for Sums of Three Cube-Full Numbers}
\author{Basile Beyer de Ryke}
\address{Independent researcher, Brussels, Belgium}
\email{basile.beyer@gmail.com}

\subjclass[2020]{Primary 11P05; Secondary 11A07, 11R44}
\keywords{cube-full numbers, powerful numbers, additive representations,
cubic characters, Chebotarev density theorem}
\date{Revised 26 July 2026}

\begin{document}

\begin{abstract}
Let $\F$ be the set of positive cube-full integers.  We prove that, for
every $\varepsilon>0$, there is a reduced residue class in which
$\F+\F+\F$ has upper relative density at most $\varepsilon$.
Consequently, the positive integers not representable as sums of at most
three cube-full numbers have positive lower natural density.  This proves the
infinitude assertion in Erd\H{o}s Problem \#940 for $r=3$.  The argument
combines a cubic-character estimate with a truncation of the parametrisation
by the canonical 4-full part.
\end{abstract}

\maketitle

\section{Introduction}

For an integer $r\geq2$, a positive integer $n$ is called $r$-full if
$p\mid n$ implies $p^r\mid n$.  Erd\H{o}s asked whether, for each
$r\geq3$, infinitely many positive integers fail to be sums of at most
$r$ $r$-full numbers, and whether the integers which do admit such a
representation have density zero \cite[p.~33]{Erdos1976}.  The problem was
later recorded by Erd\H{o}s and Ivi\'c in the Oberwolfach Problem Book and is
now listed as Erd\H{o}s Problem \#940 \cite{Bloom940}.  Erd\H{o}s attributed
the infinitude assertion to a simple counting argument, but Schinzel observed
that the argument was incorrect.  The case of three cube-full summands was
raised again by Thomas Bloom in the problem session of the 2025 Oberwolfach
workshop on analytic number theory
\cite[pp.~2756--2757]{Oberwolfach2025}.

We write $\N=\{1,2,\ldots\}$, so that all summands below are positive, and
we regard $1$ as cube-full.  Let $\F$ denote the set of cube-full positive
integers and put
\[
  \R_3=\F+\F+\F.
\]
For $A\subseteq\N$, a residue class $a\pmod M$, and $X\geq1$, set
\[
  A(X;a,M)=\#\{n\leq X:n\in A,\ n\equiv a\pmod M\}.
\]

\begin{theorem}\label{thm:main}
For every $\varepsilon>0$, there are integers $M\geq1$ and $a$, with
$(a,M)=1$, such that
\[
  \limsup_{X\to\infty}\frac{M}{X}\,\R_3(X;a,M)
  \leq \varepsilon.
\]
\end{theorem}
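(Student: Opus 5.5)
The plan is to parametrise cube-full numbers by their canonical $4$-full part, discard large $4$-full parts as a tail, and make the remaining finitely many diagonal cubic forms simultaneously rare in one residue class via cubic characters and Chebotarev.

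Every cube-full $n$ can be written uniquely as $n=x^3v$ with $v$ drawn from a fixed set $\mathcal V$ of $4$-full "canonical parts", for instance $v=b^4c^5$ with $b,c$ squarefree and coprime. Since $\#\{v\in\mathcal V:v\le T\}\ll T^{1/4}$, the series $\sum_{v\in\mathcal V}v^{-1/3}$ converges. Hence
\[
\#\{(n_1,n_2,n_3)\in\F^3:n_1+n_2+n_3\le X\}\sim C X
\]
for some constant $C$. The same estimate holds in residue classes, with a factor $1/M$ coming from local densities.

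We bound $\R_3(X;a,M)$ by the number of \emph{representations}. We split according to whether all three canonical parts satisfy $v_i\le V$ or not. The representations with some $v_i>V$ number at most
\[
\ll X\sum_{v>V}v^{-1/3}\cdot\Big(\sum_{v}v^{-1/3}\Big)^2 .
\]
Provided the implied bound survives restriction to a class modulo $M$ (uniformly in $M$), this is $\le(\varepsilon/2)X/M$ once $V$ is large. Making that uniformity rigorous is the "truncation" step. Roughly, for fixed large $v$ the cube variable $x$ runs over an interval of length $\gg M$ when $v\le X/M^3$, and the range $v>X/M^3$ is handled trivially using $M$ fixed and $X\to\infty$.

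For the finitely many triples $\mathbf v=(v_1,v_2,v_3)$ with $v_i\le V$, the count of $(x,y,z)$ with
\[
v_1x^3+v_2y^3+v_3z^3\le X,\qquad v_1x^3+v_2y^3+v_3z^3\equiv a\pmod M,
\]
is, for $M=\prod_{p\in P}p$ squarefree,
\[
\sim c_{\mathbf v}\frac{X}{M}\prod_{p\in P}\frac{N_p(\mathbf v,a)}{p^2},
\]
where $N_p(\mathbf v,a)$ is the number of solutions of the congruence modulo $p$. This follows by splitting into residue classes of $(x,y,z)$ modulo $M$.

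The key step is to choose the primes. By Chebotarev applied to $\mathbb Q(\zeta_3,\sqrt[3]{q}:q\le V)$, there are infinitely many primes $p\equiv1\pmod 3$ modulo which every $v\le V$ is a cube. For such $p$ each form is equivalent to $x^3+y^3+z^3$, so $N_p(\mathbf v,a)=N_p(a)$, independent of $\mathbf v$.

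Expanding $N_p(a)$ via cubic characters gives
\[
N_p(a)=p^2+\sum_{\chi_1,\chi_2,\chi_3}\overline{(\chi_1\chi_2\chi_3)}(a)\,J(\chi_1,\chi_2,\chi_3)\,\cdots,
\]
with Jacobi/Gauss-sum terms of size $\asymp p^{3/2}$. These terms are genuinely there: the cubic surface has large $H^2$ contributions. One then chooses the cubic class of $a$ modulo $p$ so that the main fluctuation is negative, giving
\[
N_p(a)\le p^2-c\,p^{3/2}
\]
for an absolute $c>0$ and suitable $a\not\equiv0$. Taking $P$ to be $k$ such primes and $a$ by CRT, we get
\[
\prod_{p\in P}\bigl(1-cp^{-1/2}\bigr)\to0 .
\]
Choosing $k$ so large that $\big(\sum_{\mathbf v}c_{\mathbf v}\big)\prod_{p\in P}(1-cp^{-1/2})<\varepsilon/2$ makes the truncated part at most $(\varepsilon/2)X/M$, and $(a,M)=1$ holds by construction.

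I expect the main obstacle to be the local computation: showing there really is a class $a$ with $N_p(a)/p^2\le 1-cp^{-1/2}$. First I would try averaging. The character terms have mean zero over $a$ in $\mathbb F_p^\times$ but nonzero variance $\asymp p^{3}$, so some $a$ lies below the mean by $\gg p^{3/2}$.

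A second difficulty is making the tail bound uniform in the residue class. If uniformity fails, I would take $V$ depending on $M$ and control the tail using only the crude bound on representations for large $v$, with $X\to\infty$ for fixed $M$.
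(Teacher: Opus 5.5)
Your overall architecture matches the paper's: canonical $4$-full parts, Chebotarev to make a finite set of parts into cubes, a non-cube class $a$ assembled by CRT, and a tail. However, two steps fail as written.

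\textbf{The local estimate has the wrong order of magnitude.} In the character expansion of $N_p(a)$, the triples of non-trivial cubic characters with non-trivial product give Jacobi sums of modulus $p$. The two triples with trivial product give terms of modulus $p^{1/2}$, and nothing has size $p^{3/2}$: the second cohomology of a cubic surface has Frobenius eigenvalues of absolute value $p$, and there is no weight-$3$ part. Explicitly,
$N_p(a)=p^2+3p(\chi(a)+\overline\chi(a))-(\tau(\chi)^3+\tau(\overline\chi)^3)/p$.
Hence $N_p(a)\geq p^2-3p-2p^{1/2}$ for every $a\neq0$, and the bound $N_p(a)\leq p^2-cp^{3/2}$ is false for large $p$. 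Your variance over $a$ is likewise of order $p^2$, not $p^3$. What is true is $N_p(a)/p^2\leq1-2/p$ for every non-cube $a$. This still suffices, but only because the Chebotarev primes have positive density, so $\sum1/p$ diverges over them; the mere infinitude of such primes, which is all you invoke, would not be enough.

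\textbf{The tail bound is not uniform in $M$.} This is the real obstacle once the saving is only of order $1/p$. For a triple $\mathbf v$ of parts outside your controlled range, the proportion of residue vectors modulo $M$ satisfying the congruence is $M^{-1}\prod_{p\mid M}\rho_p(a;\mathbf v)$. Here $\rho_p(a;\mathbf v)$ can equal $1+6/p+O(p^{-3/2})$: if each $v_j$ lies in the cubic class of $a$, the equation becomes $x^3+y^3+z^3=1$. It can even equal $3$ when $p$ divides two of the $v_j$. In general one only has
$\prod_{p\mid M}\rho_p(a;\mathbf v)\leq e^{8\sum_{p\mid M}1/p}\,3^{\omega(v_1)+\omega(v_2)+\omega(v_3)}$,
and the first factor is large exactly when your saving $e^{-2\sum_{p\mid M}1/p}$ on the main range is small.

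Your fallback of letting $V$ depend on $M$ is circular. The primes dividing $M$ were chosen for parts up to the old $V$, so the newly admitted parts get no saving and still need a bound carrying the factor $1/M$. A cut-off depending on $X$, as in your $v\leq X/M^3$ remark, would also need lattice-point error terms uniform in $\mathbf v$, which you do not have.

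The missing idea is the paper's order of choices:
\begin{enumerate}
\item Fix $T$ first.
\item Choose the finite set $D$ with $e^{8(T+1)}(H^3-H_D^3)$ small, where $H=\sum_d3^{\omega(d)}d^{-1/3}$ absorbs the factors $3$.
\item Take Chebotarev primes with $T\leq\sum_q1/q\leq T+1$; the upper bound caps the blow-up.
\item Treat triples in $E^3\setminus D^3$ by the uniform bound $\rho_q\leq(1+8/q)3^{\#\{j:q\mid d_j\}}$.
\item Only after $M$ is fixed, choose $E\supseteq D$ with $M(W^3-W_E^3)$ small, and bound triples with a part outside $E$ using no congruence information at all.
\end{enumerate}
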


\begin{corollary}\label{cor:density}
The complement of $\R_3$ has positive lower natural density.  In
particular, infinitely many positive integers are not sums of three
cube-full numbers.
\end{corollary}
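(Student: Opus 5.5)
We deduce Corollary~\ref{cor:density} directly from Theorem~\ref{thm:main}, since the corollary only adds averaging over a residue class.

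Apply Theorem~\ref{thm:main} with $\varepsilon=\tfrac12$. This gives integers $M\ge1$ and $a$ with $(a,M)=1$ and
\[
\limsup_{X\to\infty}\frac{M}{X}\,\R_3(X;a,M)\le\tfrac12 .
\]
The number of $n\le X$ with $n\equiv a\pmod M$ is $X/M+O(1)$. Subtracting, the complement $\N\setminus\R_3$ satisfies
\[
\#\{n\le X:\ n\notin\R_3,\ n\equiv a \ (\mathrm{mod}\ M)\}
=\frac{X}{M}+O(1)-\R_3(X;a,M).
\]

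Dividing by $X$ and taking $\liminf$ gives
\[
\liminf_{X\to\infty}\frac1X\,\#\{n\le X:n\notin\R_3\}
\ \ge\ \liminf_{X\to\infty}\frac1X\,\#\{n\le X:n\notin\R_3,\ n\equiv a\}
\ \ge\ \frac1M-\frac1{2M}=\frac1{2M}>0 .
\]
So the complement has positive lower natural density, and in particular it is infinite.

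It remains to handle the phrase ``sums of three cube-full numbers'' versus ``at most three''. Because $1\in\F$ by the paper's convention, any sum of one or two cube-full numbers can be padded with copies of $1$. More precisely, the paper's $\R_3=\F+\F+\F$ with positive summands already contains every $n$ that is a sum of at most three cube-full numbers, except possibly $n=1,2$. Hence excluding $\R_3$ also excludes representations with fewer summands for all $n\geq 3$. A finite exceptional set does not affect density, so the statement holds in either reading.

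No step here is a genuine obstacle; the only point needing care is the $O(1)$ error in counting the residue class, which vanishes after dividing by $X$.
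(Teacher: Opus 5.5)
Your deduction from Theorem~\ref{thm:main} with $\varepsilon=\tfrac12$ is correct and is exactly the paper's proof: subtract $\R_3(X;a,M)$ from the $X/M+O(1)$ integers in the progression and take $\liminf$.

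Your final paragraph, however, is wrong. It is not needed for the statement as written, which concerns only $\R_3$, but the claim itself is false. Padding with copies of $1$ changes the sum. To pad without changing it you would need $0$ as a summand, and the convention $\N=\{1,2,\ldots\}$ excludes this.

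Concretely, the only elements of $\R_3$ up to $16$ are $3=1+1+1$ and $10=1+1+8$. So $8$, $9=1+8$ and $16=8+8$ are sums of at most three cube-full numbers that do not lie in $\R_3$. The claim ``except possibly $n=1,2$'' therefore fails, and there is no reason for the exceptional set to be finite.

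The correct way to pass to the ``at most three'' reading is the counting bound in the paper's remark after the corollary. By \eqref{eq:Fcount}, at most $WX^{1/3}+W^2X^{2/3}=O(X^{2/3})$ integers up to $X$ are sums of one or two cube-full numbers. This is $o(X)$, so removing them does not affect your lower bound $1/(2M)$.
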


\begin{proof}
Apply Theorem~\ref{thm:main} with $\varepsilon=1/2$.  Since the progression
$a\pmod M$ contains $X/M+O(1)$ positive integers not exceeding $X$,
\[
 \liminf_{X\to\infty}\frac1X\#\{n\leq X:n\notin\R_3\}
 \geq \frac{1}{2M}>0.
\]
\end{proof}

Equation~\eqref{eq:Fcount} below shows that one-term and two-term sums
contribute only $O(X^{2/3})$ integers up to $X$.  Hence the integers not
representable as sums of at most three cube-full numbers also have positive
lower natural density.  In particular, Corollary~\ref{cor:density} confirms
Bloom's prediction in the 2025 Oberwolfach problem session: the set of
three-term sums has upper natural density strictly less than $1$
\cite[pp.~2756--2757]{Oberwolfach2025}.

The main arithmetic input is the following. For a finite set $D$ of 4-full parts, let $\zeta_3$ be a primitive cube
root of unity and consider
\[
 K_D=\mathbb Q\bigl(\zeta_3,\sqrt[3]{d}:d\in D\bigr).
\]
This is the splitting field of
\[
 (X^2+X+1)\prod_{d\in D}(X^3-d).
\]
By the Chebotarev density theorem, the primes that split completely in
$K_D$ have positive density \cite[Chapter~VII,
Theorem~13.4]{Neukirch1999}.  After removing the finitely many primes dividing
$3\prod_{d\in D}d$, every remaining prime $q$ satisfies
$q\equiv1\pmod3$, and every $d\in D$ is a non-zero cube modulo $q$.
Lemma~\ref{lem:splitting} gives the short proof of these two conclusions.

\paragraph*{Outline of the paper.}
Every cube-full integer has a unique expression $dx^3$, where $d$ is
its 4-full part.  The two sums
\[
 W=\sum_d d^{-1/3}
 \qquad\text{and}\qquad
 H=\sum_d3^{\omega(d)}d^{-1/3},
\]
taken over all 4-full parts $d$, converge; here $\omega(d)$ is the
number of distinct prime divisors of $d$.  We begin by choosing a finite
set $D$ so that the tail of $H$ outside $D$ is small.  Chebotarev's
theorem then gives a positive-density set of primes $q\equiv1\pmod3$ for
which every $d\in D$ is a cube modulo $q$.  The sum of the reciprocals
of these primes diverges, so we may select sufficiently many of them.

For each selected prime $q$, we choose a non-cube residue $u_q$.  The
Chinese remainder theorem combines these residues into one reduced residue
class $a\pmod M$.  If $d_1,d_2,d_3\in D$, then the congruence
\[
 d_1x_1^3+d_2x_2^3+d_3x_3^3\equiv a\pmod q
\]
has at most $q^2-2q$ solutions.  Thus each selected prime gives a local
density factor at most $1-2/q$, and the product of these factors can be
made arbitrarily small.

The finite set $D$ does not contain every 4-full part.  After $M$ has
been fixed, we therefore choose a larger finite set $E$.  The tail of $H$
bounds the representations whose three 4-full parts belong to $E$ but do
not all belong to $D$.  The tail of $W$ bounds the representations having
at least one 4-full part outside $E$.  Combining these three estimates
proves Theorem~\ref{thm:main}.

We begin in Section~2 with the canonical 4-full part.  We then prove the
local cubic-character estimate in Section~3 and count representations with
fixed 4-full parts in Section~4.  Section~5 combines these ingredients to
prove the main theorem.  Finally, Section~6 explains why the elementary
counting argument alone cannot give the theorem.

The other question in the Oberwolfach report, whether sums of three
cube-full numbers have positive density, remains open.  A positive answer
would follow from the corresponding conjecture for three positive cubes,
which is itself unresolved; see Maynard \cite{Maynard2026}.

\section{Cube-full numbers and their 4-full parts}

We begin by separating a perfect cube from its 4-full part.  The resulting
weights over the 4-full parts are summable.

\begin{lemma}\label{lem:canonical}
Every cube-full positive integer $n$ has a unique expression
\[
  n=x^3b^4c^5,
\]
where $x,b,c\geq1$ and $bc$ is squarefree.
\end{lemma}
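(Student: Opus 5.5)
The proof is prime by prime: we reduce existence and uniqueness to a statement about exponents, using the fundamental theorem of arithmetic. Write $n=\prod_p p^{e_p}$ with each $e_p\geq3$ whenever $e_p>0$. An expression $n=x^3b^4c^5$ with $bc$ squarefree is equivalent to choosing, for each prime $p$, integers $f_p\geq0$ (the exponent of $p$ in $x$) and $\beta_p,\gamma_p\in\{0,1\}$ with $\beta_p\gamma_p=0$, such that
\[
 e_p=3f_p+4\beta_p+5\gamma_p .
\]
Here $\beta_p\gamma_p=0$ and $\beta_p,\gamma_p\le1$ encode that $b$ and $c$ are squarefree and coprime, which together mean $bc$ is squarefree. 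Since these choices are independent across primes, both existence and uniqueness reduce to this single-prime statement.

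For $e_p=0$ the only choice is $f_p=\beta_p=\gamma_p=0$. For $e\geq3$ we split by the residue of $e$ modulo $3$, which must agree with the residue of $4\beta+5\gamma$. Since $(\beta,\gamma)\in\{(0,0),(1,0),(0,1)\}$, the quantity $4\beta+5\gamma$ takes the values $0,4,5$, which are $\equiv 0,1,2\pmod 3$ respectively. Hence the residue of $e$ determines $(\beta,\gamma)$ uniquely, and then $f=(e-4\beta-5\gamma)/3$ is determined.

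The one thing to verify is that $f\geq0$. This is exactly where cube-fullness is used: if $e\equiv1$ we need $e\geq4$, and if $e\equiv2$ we need $e\geq5$. Both hold because $e\geq3$ and $e$ is not $3$ in those residue classes; note $e=1$ and $e=2$ are excluded by hypothesis. If $e\equiv0$, then $f=e/3\geq1$.

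Taking products over primes gives $x,b,c$ and proves existence. Uniqueness follows because any representation induces, at each prime, a triple satisfying the displayed equation, and that triple is unique. No step is a real obstacle; the only point needing care is the nonnegativity check above. As a remark for later use, $d=b^4c^5$ is the 4-full part, each prime of $d$ occurring to exponent $4$ or $5$.
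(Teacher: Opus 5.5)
Your proof is correct and follows the paper's argument: the reduction to exponents $e=3f+4\beta+5\gamma$ with the residue of $e$ modulo $3$ forcing $(\beta,\gamma)$ is exactly the paper's case split $e=3u,\,3u+4,\,3u+5$. Your explicit check that $f\geq0$, using $e\geq3$ from cube-fullness, spells out a step the paper leaves implicit.
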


\begin{proof}
Let $p^e\Vert n$.  If $e>0$, then $e\geq3$.  According as
$e\equiv0,1,2\pmod3$, there is a unique expression
\[
  e=3u,\qquad e=3u+4,\qquad e=3u+5,
\]
respectively, with $u\geq0$.  In the second case place $p$ in $b$,
and in the third place it in $c$; the remaining power of $p$ belongs to
$x^3$.  Doing this prime by prime proves both existence and uniqueness.
\end{proof}

This decomposition is standard in the study of cube-full numbers; compare
\cite{Shiu1991}.

Let
\[
  \C=\{b^4c^5:bc\text{ is squarefree}\}.
\]
Thus $\C$ is the set of possible 4-full parts, and every $m\in\F$
is uniquely $m=dx^3$, with $d\in\C$ and
$x\in\N$.  Here and below, $\omega(d)$ denotes the number of distinct
prime divisors of $d$.  For later use, set
\begin{align}
 W&=\sum_{d\in\C}d^{-1/3}
   =\prod_p(1+p^{-4/3}+p^{-5/3}),\label{eq:W}\\
 H&=\sum_{d\in\C}3^{\omega(d)}d^{-1/3}
   =\prod_p(1+3p^{-4/3}+3p^{-5/3}),\label{eq:H}
\end{align}
and both products converge absolutely.  For a finite set
$D\subset\C$, write
\[
  W_D=\sum_{d\in D}d^{-1/3},\qquad
  H_D=\sum_{d\in D}3^{\omega(d)}d^{-1/3}.
\]
From \eqref{eq:W} we also obtain
\begin{equation}\label{eq:Fcount}
  \#(\F\cap[1,X])\leq W X^{1/3},
\end{equation}
since a fixed 4-full part $d$ allows at most $(X/d)^{1/3}$ choices for $x$.

\section{A cubic-character estimate}

We next prove the local estimate that gives a saving at each selected prime.
The saving is of order $q^{-1}$.

Let $q\equiv1\pmod3$ be prime.  Fix a non-trivial cubic character $\chi$
on $\Fq^\times$, extended by $\chi(0)=0$, and write
\[
 e_q(t)=\exp(2\pi i t/q),\qquad
 \tau(\chi)=\sum_{y\in\Fq}\chi(y)e_q(y).
\]
Since $q\equiv1\pmod6$, we have $\chi(-1)=1$, and therefore
\[
 \tau(\chi)\tau(\overline\chi)=q,
 \qquad |\tau(\chi)|=|\tau(\overline\chi)|=q^{1/2}.
\]

\begin{lemma}\label{lem:three-cubes}
For $u\in\Fq^\times$, let
\[
 N_q(u)=\#\{(x,y,z)\in\Fq^3:x^3+y^3+z^3=u\}.
\]
Then
\begin{equation}\label{eq:exact-local}
 N_q(u)=q^2+3q\bigl(\chi(u)+\overline\chi(u)\bigr)
 -\frac{\tau(\chi)^3+\tau(\overline\chi)^3}{q}.
\end{equation}
In particular, if $u$ is not a cube modulo $q$, then
\begin{equation}\label{eq:deficit}
  q^{-2}N_q(u)\leq1-\frac2q.
\end{equation}
\end{lemma}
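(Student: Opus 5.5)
We prove \eqref{eq:exact-local} with the standard additive-character (Gauss sum) computation, and then deduce \eqref{eq:deficit} from the sign of $\chi(u)+\overline\chi(u)$ at a non-cube.

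\textbf{Step 1: the cube-root count.} For $t\in\Fq$, the number of $x\in\Fq$ with $x^3=t$ is
\[
 1+\chi(t)+\overline\chi(t).
\]
For $t=0$ this holds because $\chi(0)=0$. For $t\neq0$ it follows from orthogonality of the characters of order dividing $3$, since $q\equiv1\pmod3$.

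\textbf{Step 2: expand $N_q(u)$.} Summing over $(t_1,t_2,t_3)$ with $t_1+t_2+t_3=u$ gives
\[
 N_q(u)=\sum_{t_1+t_2+t_3=u}\prod_{i=1}^3\bigl(1+\chi(t_i)+\overline\chi(t_i)\bigr).
\]
Expand the product into $27$ terms indexed by $(\psi_1,\psi_2,\psi_3)\in\{1,\chi,\overline\chi\}^3$, and treat them by type.
\begin{itemize}
\item The all-trivial term gives $q^2$.
\item Any term with exactly one or two trivial factors vanishes. Summing a free variable over $\Fq$ kills each nontrivial character, because $\sum_t\psi(t)=0$; with two trivial factors, the one remaining variable is still summed over all of $\Fq$.
\item Terms with all three factors nontrivial are Jacobi-type sums. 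For $u\neq0$, substitute $t_i=u s_i$ to get
\[
 \psi_1\psi_2\psi_3(u)\,J(\psi_1,\psi_2,\psi_3),\qquad J(\psi_1,\psi_2,\psi_3)=\sum_{s_1+s_2+s_3=1}\psi_1(s_1)\psi_2(s_2)\psi_3(s_3).
\]
\end{itemize}
The standard Gauss sum identities then evaluate these:
\begin{itemize}
\item If $\psi_1\psi_2\psi_3$ is nontrivial, then $J=\tau(\psi_1)\tau(\psi_2)\tau(\psi_3)/\tau(\psi_1\psi_2\psi_3)$.
\item If $\psi_1\psi_2\psi_3$ is trivial, then $J=-\tau(\psi_1)\tau(\psi_2)\tau(\psi_3)/q$. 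This is where $\chi(-1)=1$ enters, through $\tau(\psi)\tau(\overline\psi)=q$.
\end{itemize}

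\textbf{Step 3: sort the eight all-nontrivial terms.} The products $\chi^3=\overline\chi^3=1$ each contribute $-\tau(\chi)^3/q$ and $-\tau(\overline\chi)^3/q$.

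Three of the terms are permutations of $(\chi,\chi,\overline\chi)$, with product character $\chi$. Each gives
\[
 \chi(u)\,\tau(\chi)^2\tau(\overline\chi)/\tau(\chi)=\chi(u)\,q.
\]
Together these contribute $3q\chi(u)$. Symmetrically, the permutations of $(\overline\chi,\overline\chi,\chi)$ give $3q\overline\chi(u)$. Collecting everything yields \eqref{eq:exact-local}.

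\textbf{Step 4: deduce the deficit.} If $u$ is not a cube, then $\chi(u)$ is a primitive cube root of unity. Hence $\chi(u)+\overline\chi(u)=-1$, and the middle term of \eqref{eq:exact-local} equals $-3q$.

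The Gauss sum term is bounded trivially:
\[
 \left|\frac{\tau(\chi)^3+\tau(\overline\chi)^3}{q}\right|\le 2q^{1/2}.
\]
Therefore $N_q(u)\le q^2-3q+2q^{1/2}$. This is at most $q^2-2q$ as soon as $2q^{1/2}\le q$, that is, for $q\ge4$. Every prime $q\equiv1\pmod3$ satisfies $q\ge7$, so this holds. Dividing by $q^2$ gives \eqref{eq:deficit}.

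The main obstacle is the bookkeeping in the Jacobi sum evaluation: getting the sign and normalisation of the trivial-product case right, and checking that the terms with a trivial factor vanish exactly.
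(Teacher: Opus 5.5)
Your proof is correct, but it takes a different route from the paper's: you work with multiplicative characters and Jacobi sums, while the paper works with additive characters and the cubic Weil sum.

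\textbf{The paper's route.} It uses the cube-root count $1+\chi+\overline\chi$ only to evaluate $S(s)=\sum_x e_q(sx^3)=\tau(\chi)\overline\chi(s)+\tau(\overline\chi)\chi(s)$. It then writes $N_q(u)=q^{-1}\sum_s e_q(-su)S(s)^3$ and cubes $S(s)$ using $\tau(\chi)\tau(\overline\chi)=q$. It finishes with $\sum_{s\neq0}e_q(-su)=-1$ and the elementary twisted Gauss sums $\sum_{s\ne0}\chi(s)e_q(-su)=\overline\chi(u)\tau(\chi)$ and its conjugate.

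\textbf{Your route.} You expand $\prod_i\bigl(1+\chi(t_i)+\overline\chi(t_i)\bigr)$ over the plane $t_1+t_2+t_3=u$. You kill the $18$ mixed terms by summing out a free variable, and you evaluate the eight surviving Jacobi sums through the Gauss--Jacobi relations.

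\textbf{What each buys.} Your bookkeeping is more transparent: one sees exactly which character triples produce $q^2$, $3q\chi(u)$, $3q\overline\chi(u)$ and the $\tau^3/q$ terms. The cost is that you must quote the trivial-product relation $J(\psi_1,\psi_2,\psi_3)=-\tau(\psi_1)\tau(\psi_2)\tau(\psi_3)/q$, which is less elementary than anything the paper uses. The paper's Fourier computation is more self-contained. It also carries over verbatim to Lemma~\ref{lem:uniform}, where $S(s)^3$ is simply replaced by $\prod_j S(a_js)$. Your method adapts there as well, via $\psi_i(t_i/a_i)$ and the bounds $|J|\in\{q,q^{1/2}\}$.

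\textbf{A small correction to your commentary.} The trivial-product relation holds for any nontrivial characters with trivial product, with no parity hypothesis: the factors $\psi_3(-1)$ arising in its derivation cancel. Where $\chi(-1)=1$ is genuinely used in your argument is Step~3, to write $\tau(\chi)\tau(\overline\chi)=q$ rather than $\chi(-1)q$. This does not affect the validity of your proof, since $\chi(-1)=1$ holds here.
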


\begin{proof}
For $s\neq0$, put $S(s)=\sum_{x\in\Fq}e_q(sx^3)$.  The number of
cube roots of $y\in\Fq$ is
$1+\chi(y)+\overline\chi(y)$, whence
\begin{equation}\label{eq:S}
 S(s)=\tau(\chi)\overline\chi(s)
      +\tau(\overline\chi)\chi(s).
\end{equation}
Additive orthogonality gives
\[
 N_q(u)=\frac1q\sum_{s\in\Fq}e_q(-su)S(s)^3,
 \qquad S(0)=q.
\]
Writing $\tau=\tau(\chi)$, $\tau'=\tau(\overline\chi)$, and using
$\tau\tau'=q$, we obtain, for $s\neq0$,
\[
 S(s)^3=\tau^3+(\tau')^3
       +3q\tau\overline\chi(s)+3q\tau'\chi(s).
\]
Now
\[
 \sum_{s\neq0}e_q(-su)=-1,
\]
while the two remaining sums are the usual Gauss sums
\[
 \sum_{s\neq0}\overline\chi(s)e_q(-su)=\chi(u)\tau',
 \quad
 \sum_{s\neq0}\chi(s)e_q(-su)=\overline\chi(u)\tau.
\]
Substitution proves \eqref{eq:exact-local}.  If $u$ is a non-cube, then
$\chi(u)+\overline\chi(u)=-1$, so
\[
 N_q(u)\leq q^2-3q+2q^{1/2}\leq q^2-2q;
\]
here $q\geq7$.  This is \eqref{eq:deficit}.
\end{proof}

For integer coefficients $a_1,a_2,a_3$, define
\[
 N_q(u;\mathbf a)=
 \#\{\mathbf x\in\Fq^3:a_1x_1^3+a_2x_2^3+a_3x_3^3=u\},
 \qquad \rho_q(u;\mathbf a)=q^{-2}N_q(u;\mathbf a).
\]

When a coefficient is not controlled by the chosen splitting conditions, we
use only the following uniform upper bound.

\begin{lemma}\label{lem:uniform}
Let $u\in\Fq^\times$.  If $q\nmid a_1a_2a_3$, then
\[
  |N_q(u;\mathbf a)-q^2|\leq8q.
\]
For arbitrary coefficients, $N_q(u;\mathbf a)\leq3q^2$.  Consequently,
\begin{equation}\label{eq:weighted-local}
 \rho_q(u;\mathbf a)
 \leq\left(1+\frac8q\right)
 3^{\sum_{j=1}^3\mathbf1_{q\mid a_j}}.
\end{equation}
\end{lemma}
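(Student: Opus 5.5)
We prove the three assertions of Lemma~\ref{lem:uniform} separately and then combine them. We follow the computation in Lemma~\ref{lem:three-cubes}, with the coefficients carried along. First assume $q\nmid a_1a_2a_3$ and $q\equiv1\pmod3$. Additive orthogonality gives
\[
 N_q(u;\mathbf a)=\frac1q\sum_{s\in\Fq}e_q(-su)\prod_{j=1}^3S(sa_j).
\]
The term $s=0$ contributes $q^2$. For $s\neq0$, formula \eqref{eq:S} and $|\tau(\chi)|=|\tau(\overline\chi)|=q^{1/2}$ give $|S(sa_j)|\leq2q^{1/2}$. Hence each of the $q-1$ non-zero terms is at most $8q^{3/2}$ in absolute value, and their total contribution is at most $8q^{3/2}(q-1)/q$. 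This bound is too weak to give $8q$.

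We therefore expand the product exactly. Write $S(sa_j)=\tau\overline\chi(s)\overline\chi(a_j)+\tau'\chi(s)\chi(a_j)$. The product over $j$ is a sum of eight terms, each of the form $\tau^k\tau'^{3-k}\psi(s)c$. Here $|c|=1$ and $\psi$ is $\overline\chi^{\,k}\chi^{3-k}$, so $\psi$ is trivial exactly when $k\in\{0,3\}$. Summing over $s\neq0$ against $e_q(-su)$ has two outcomes:
\begin{itemize}
\item a trivial $\psi$ gives $-1$ times $\tau^3$ or $\tau'^3$, which has size $q^{3/2}$;
\item a non-trivial $\psi$ gives a Gauss sum of size $q^{1/2}$, multiplied by $\tau^k\tau'^{3-k}$ of size $q^{3/2}$, for a total of size $q^2$.
\end{itemize}
After dividing by $q$, the error is at most $2q^{1/2}+6q\leq8q$.

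Second, suppose $q\not\equiv1\pmod3$, including $q=3$. When $q\neq3$, cubing is a bijection on $\Fq$, so $N_q=q^2$ exactly. The case $q=3$ with $q\nmid a_1a_2a_3$ is also bijective, since $x\mapsto x^3$ is the identity on $\mathbb F_3$. So the uniform bound holds in these cases as well.

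Third, for arbitrary coefficients we fix the variables with $q\mid a_j$ freely. If all $a_j\equiv0$, then $N=0$ because $u\neq0$. Otherwise choose an index $i$ with $q\nmid a_i$. For each choice of the other two variables there are at most $3$ values of $x_i$, which gives $N\leq3q^2$.

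Finally we deduce \eqref{eq:weighted-local} by cases on $m=\#\{j:q\mid a_j\}$.
\begin{itemize}
\item If $m=0$, the bound $\rho\leq1+8/q$ follows from the first part.
\item If $m\geq1$, let $t=3-m$ be the number of coefficients prime to $q$. The $m$ variables with $q\mid a_j$ contribute a factor $q^m$. The remaining variables form a diagonal cubic form in $t$ variables with non-zero coefficients, taking the value $u$. For $t=1$ the count is at most $3$. For $t=2$ the count is at most $3q$, since each choice of one variable leaves at most $3$ choices for the other. In both cases $N\leq3q^{2}$, which is at most $3^m q^2(1+8/q)$.
\end{itemize}

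The main obstacle is the first part: the error term must be obtained in the exact form $8q$ rather than the crude $8q^{3/2}$. This is why we expand the product exactly and use that the eight character products split into two trivial and six non-trivial ones.
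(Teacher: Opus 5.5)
Your proof is correct and follows the paper's argument: you expand $\prod_j S(a_js)$ exactly into eight terms, each contributing at most $q$ after division by $q$ (your split $2q^{1/2}+6q$ is only a slightly sharper count), bound $N_q(u;\mathbf a)\le 3q^2$ by counting roots, and finish with the same case split on whether some $a_j$ vanishes modulo $q$. Your root-counting argument (fix two variables, then at most three choices for an $x_i$ with $q\nmid a_i$) also covers the all-non-zero case without needing $q\geq7$. The extra treatment of $q\not\equiv1\pmod3$ is harmless but unnecessary, since the section already fixes $q\equiv1\pmod3$.
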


\begin{proof}
When all coefficients are non-zero modulo $q$, orthogonality and
\eqref{eq:S} give
\[
 N_q(u;\mathbf a)-q^2
 =\frac1q\sum_{s\neq0}e_q(-su)\prod_{j=1}^3S(a_js).
\]
Expanding the product gives eight terms.  Each coefficient has modulus
$q^{3/2}$, and the corresponding sum over $s$ is either $-1$ or a
Gauss sum of modulus $q^{1/2}$.  After division by $q$, each term has
modulus at most $q$.

If exactly one coefficient is non-zero, the active cubic equation has at most
three solutions and the other two variables are free.  If exactly two are
non-zero, we fix one active variable and again have at most three choices for
the other, while the inactive variable is free.  Both cases give $3q^2$.
There is no solution when all coefficients vanish because $u\neq0$; with
three non-zero coefficients, the first estimate and $q\geq7$ give
$q^2+8q<3q^2$.  Finally, if some $a_j$ vanishes modulo $q$, the right
side of \eqref{eq:weighted-local} is at least $3$, and otherwise the first
estimate applies.  This proves \eqref{eq:weighted-local}.
\end{proof}

\section{Counting with fixed 4-full parts}

We now use the local estimates to count representations once the three
4-full parts have been fixed.

Set
\[
 V=\vol\{\mathbf t\in\mathbb R_{\geq0}^3:
 t_1^3+t_2^3+t_3^3\leq1\}=\Gamma(4/3)^3.
\]
Let $M$ be squarefree, let $(a,M)=1$, and fix
$\mathbf d=(d_1,d_2,d_3)\in\C^3$.  We denote by
$R_{\mathbf d}(X;a,M)$ the number of $\mathbf x\in\N^3$ for which
\[
 d_1x_1^3+d_2x_2^3+d_3x_3^3\leq X,
 \qquad
 d_1x_1^3+d_2x_2^3+d_3x_3^3\equiv a\pmod M.
\]

\begin{proposition}\label{prop:lattice}
Suppose $M=\prod_{q\in Q}q$, where the primes in $Q$ are distinct.  For
fixed $M$ and $\mathbf d$,
\begin{equation}\label{eq:lattice}
 R_{\mathbf d}(X;a,M)
 =\frac{VX}{M}(d_1d_2d_3)^{-1/3}
   \prod_{q\in Q}\rho_q(a;\mathbf d)
   +O_{M,\mathbf d}(X^{2/3}).
\end{equation}
\end{proposition}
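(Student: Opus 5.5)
We split the count according to the residue class of $\mathbf x$ modulo $M$. Since $M$ is squarefree, the Chinese remainder theorem identifies $(\mathbb Z/M\mathbb Z)^3$ with $\prod_{q\in Q}\Fq^3$. A vector $\mathbf r\in(\mathbb Z/M\mathbb Z)^3$ satisfies $d_1r_1^3+d_2r_2^3+d_3r_3^3\equiv a\pmod M$ exactly when, for every $q\in Q$, its reduction is a solution of the corresponding congruence modulo $q$. Hence the number of admissible classes $\mathbf r$ is
\[
 \prod_{q\in Q}N_q(a;\mathbf d)=M^2\prod_{q\in Q}\rho_q(a;\mathbf d).
\]
We then write
\[
 R_{\mathbf d}(X;a,M)=\sum_{\mathbf r\ \text{admissible}}
 \#\{\mathbf x\in\N^3:\mathbf x\equiv\mathbf r\ (\mathrm{mod}\ M),\ d_1x_1^3+d_2x_2^3+d_3x_3^3\le X\}.
\]
This reduces the proposition to counting points of the translated lattice $\mathbf r+M\mathbb Z^3$ in a fixed region.

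For each $\mathbf r$ we count lattice points in
\[
 B_X=\{\mathbf t\in\mathbb R_{>0}^3: d_1t_1^3+d_2t_2^3+d_3t_3^3\le X\}.
\]
The substitution $t_j=(X/d_j)^{1/3}s_j$ gives $\vol B_X=VX(d_1d_2d_3)^{-1/3}$. The lattice $\mathbf r+M\mathbb Z^3$ has covolume $M^3$, so the expected count for one class is $\vol(B_X)/M^3$. Summing over the $M^2\prod_q\rho_q$ admissible classes gives exactly the main term $\frac{VX}{M}(d_1d_2d_3)^{-1/3}\prod_q\rho_q(a;\mathbf d)$ of \eqref{eq:lattice}.

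It remains to show that each class contributes an error of $O_{M,\mathbf d}(X^{2/3})$. The number of classes is at most $M^3$, which is harmless because the implied constant may depend on $M$. I would prove the error bound by an elementary fibration rather than appealing to a general Lipschitz principle. Fix $x_1,x_2$ in the prescribed classes. Then $x_3$ runs over an arithmetic progression with difference $M$ inside an interval of length $((X-d_1x_1^3-d_2x_2^3)/d_3)^{1/3}$, so the count of $x_3$ equals that length divided by $M$, plus $O(1)$.

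Summing the $O(1)$ errors over the $O_{\mathbf d}(X^{2/3})$ admissible pairs $(x_1,x_2)$ costs $O(X^{2/3})$. Next we replace the sum over $x_2$ of the resulting length function by an integral. That function is monotone in $x_2$ and bounded by $O(X^{1/3})$. Comparing a sum of a monotone function over a progression with its integral therefore costs $O(X^{1/3})$ for each $x_1$, hence $O(X^{2/3})$ in total after summing over the $O(X^{1/3})$ values of $x_1$. Finally, the same comparison applied to the outer sum over $x_1$ costs $O(X^{2/3})$.

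The positivity constraint $x_j\ge1$, as opposed to $x_j\ge0$, changes the count only by boundary slices of size $O(X^{2/3})$, which the fibration absorbs automatically. The one point needing care is that the middle comparison really applies to a monotone, bounded function, so that the error is not a lattice-point discrepancy requiring finer tools. The square-root cusp of the length function at the boundary does not break this, because monotonicity alone gives an error bounded by the maximum value $O(X^{1/3})$. I therefore expect no genuine obstacle: the proposition is a routine lattice count combined with the Chinese remainder theorem factorisation of the local densities.
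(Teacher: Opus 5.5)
Your proof is correct, and its skeleton matches the paper's. You split by the class of $\mathbf x$ modulo $M$, count the admissible classes as $\prod_{q\in Q}N_q(a;\mathbf d)=M^2\prod_{q\in Q}\rho_q(a;\mathbf d)$ by the Chinese remainder theorem, and show that each class contributes $\vol(B_X)/M^3+O(X^{2/3})$.

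The difference lies in this last step. The paper quotes Davenport's lattice-point principle. It verifies the hypotheses through convexity of the region, so that line sections are intervals, and through the sizes $O(X^{2/3})$, $O(X^{1/3})$, $O(1)$ of the coordinate projections. You instead prove the estimate by hand. You slice in $x_3$, then $x_2$, then $x_1$, and use only that each slice function is non-negative, decreasing and bounded. This is in effect Davenport's inductive argument specialised to the present region. Your route gains several things:
\begin{itemize}
\item it is self-contained;
\item it needs no convexity, only that the region is closed under decreasing coordinates;
\item it handles the condition $x_j\geq1$ automatically;
\item its constants are easy to track: each class contributes an error at most a constant times $X^{2/3}\bigl((d_1d_2)^{-1/3}+(d_1d_3)^{-1/3}+(d_2d_3)^{-1/3}\bigr)$.
\end{itemize}
The paper's citation gains brevity and applies verbatim to more general regions.

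Two small points in your write-up. First, for the outer comparison you should state what the inner integral is. It is the area of $\{(t_2,t_3)\in\mathbb R_{\geq0}^2:d_2t_2^3+d_3t_3^3\leq X-d_1x_1^3\}$, a constant multiple of $(d_2d_3)^{-1/3}(X-d_1x_1^3)^{2/3}$. Hence it is decreasing in $x_1$ and $O(X^{2/3})$, which is what your monotone comparison needs. Second, the endpoint singularity of the length function is of cube-root type, not square-root type. This is harmless, since you use only monotonicity.
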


\begin{proof}
The region
\[
 \Omega_{\mathbf d}(X)=\{\mathbf t\in\mathbb R_{\geq0}^3:
 d_1t_1^3+d_2t_2^3+d_3t_3^3\leq X\}
\]
is a fixed compact semialgebraic region dilated by $X^{1/3}$, and its
volume is $VX(d_1d_2d_3)^{-1/3}$.  Davenport's lattice-point principle
\cite{Davenport1951}, applied after translating and rescaling each residue
class modulo $M$, yields
\[
 \#\{\mathbf x\in\Omega_{\mathbf d}(X)\cap\N^3:
       \mathbf x\equiv\mathbf r\pmod M\}
 =\frac{VX}{M^3}(d_1d_2d_3)^{-1/3}
  +O_{M,\mathbf d}(X^{2/3}).
\]
The body and all its coordinate projections are convex, so their intersections
with coordinate-parallel lines are intervals.  Its two-dimensional coordinate
projections have area $O_{\mathbf d}(X^{2/3})$, its one-dimensional
projections have length $O_{\mathbf d}(X^{1/3})$, and the zero-dimensional
term is $O(1)$.  These are precisely the quantities in Davenport's error
term; the finitely many residue classes are absorbed into the dependence on
$M$.
Replacing non-negative lattice points by positive ones affects only points on
the coordinate hyperplanes and is absorbed by the same error term.

The Chinese remainder theorem shows that the number of admissible residue
vectors $\mathbf r\pmod M$ is
\[
 \prod_{q\in Q}N_q(a;\mathbf d)
 =M^2\prod_{q\in Q}\rho_q(a;\mathbf d).
\]
There are only finitely many such vectors once $M$ is fixed.  Summing the
preceding estimate proves \eqref{eq:lattice}.
\end{proof}

Keeping the 4-full parts fixed is essential.  The dependence of the error in
\eqref{eq:lattice} on $\mathbf d$ forces us to restrict the 4-full parts before
applying Proposition~\ref{prop:lattice}.  The omitted triples will be
estimated without congruence information.

\section{Proof of the main theorem}

We first justify the splitting-prime input stated in the introduction.

\begin{lemma}\label{lem:splitting}
Let $D$ be a finite set of positive integers, let $\zeta_3$ be a
primitive cube root of unity, and put
\[
 K_D=\mathbb Q\bigl(\zeta_3,\sqrt[3]{d}:d\in D\bigr).
\]
Then $K_D/\mathbb Q$ is a finite Galois extension.  If
$q\nmid 3\prod_{d\in D}d$ is a prime that splits completely in $K_D$,
then $q\equiv1\pmod3$, and every $d\in D$ is a non-zero cube modulo
$q$.
\end{lemma}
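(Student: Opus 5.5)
The plan is to establish the Galois property first and then to read both congruence conclusions off the splitting of primes in subfields of $K_D$. For the first point, I will note that $K_D$ is the splitting field over $\mathbb Q$ of the separable polynomial
\[
 f=(X^2+X+1)\prod_{d\in D}(X^3-d).
\]
The roots of $X^3-d$ are $\sqrt[3]{d},\zeta_3\sqrt[3]{d},\zeta_3^2\sqrt[3]{d}$, so $K_D$ contains all roots of $f$ and is generated by them. Since we are in characteristic zero, $K_D/\mathbb Q$ is therefore finite and Galois. (The product itself may have repeated factors when $D$ contains cubes, but that does not matter: a splitting field of any polynomial over $\mathbb Q$ is Galois.)

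For the congruence $q\equiv1\pmod3$, I will use the fact that complete splitting in $K_D$ implies complete splitting in every subfield, in particular in $\mathbb Q(\zeta_3)$. Because $q\neq3$, $q$ is unramified there. By the standard description of prime splitting in cyclotomic fields (or directly by Dedekind--Kummer applied to $X^2+X+1$), $q$ splits completely in $\mathbb Q(\zeta_3)$ exactly when $X^2+X+1$ has a root modulo $q$. That happens exactly when $\mathbb F_q^\times$ has an element of order $3$, that is, when $3\mid q-1$.

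For the cube condition, I will fix $d\in D$ and work with a prime $\mathfrak Q$ of $K_D$ above $q$. Complete splitting means the residue field $\mathcal O_{K_D}/\mathfrak Q$ is $\mathbb F_q$. The element $\alpha=\sqrt[3]{d}$ is an algebraic integer in $K_D$, so its reduction $\bar\alpha\in\mathbb F_q$ satisfies $\bar\alpha^3=d\bmod q$. Hence $d$ is a cube modulo $q$, and it is non-zero because $q\nmid d$. This is the cleanest route, since it avoids any conductor or index issues that would arise if we applied Dedekind--Kummer to $\mathbb Z[\sqrt[3]{d}]$.

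I do not expect a serious obstacle. The only point needing care is the residue-field argument: the equality of the residue degree $f(\mathfrak Q/q)$ with $1$ is precisely what complete splitting means, and it is what lets us place $\bar\alpha$ in the prime field $\mathbb F_q$.
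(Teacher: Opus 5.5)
Your proposal is correct and follows the paper's proof: $K_D$ is Galois as a splitting field, and the cube condition comes from reducing $\sqrt[3]{d}$ into the residue field $\mathcal O_{K_D}/\mathfrak Q\cong\mathbb F_q$, exactly as you do. The only variation is in showing $q\equiv1\pmod3$. There the paper applies the same residue-field device to $\zeta_3$: its reduction is a root of $z^2+z+1$ different from $1$ because $q\neq3$, hence of order $3$ in $\mathbb F_q^\times$. You instead pass to $\mathbb Q(\zeta_3)$ and invoke Dedekind--Kummer. Both are valid, and the paper's version just avoids the extra tool.
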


\begin{proof}
The field $K_D$ is the splitting field over $\mathbb Q$ of
\[
 (X^2+X+1)\prod_{d\in D}(X^3-d),
\]
so it is finite and Galois.  Let $\mathfrak q$ be a prime of $K_D$
above $q$.  Since $q$ splits completely, its residue degree at
$\mathfrak q$ is one, and hence
$\mathcal O_{K_D}/\mathfrak q\cong\mathbb F_q$.

The reduction of $\zeta_3$ satisfies $z^2+z+1=0$ and is not $1$,
because $3\not\equiv0\pmod q$.  It therefore has order $3$ in
$\mathbb F_q^\times$, so $3\mid q-1$.  For each $d\in D$, the
algebraic integer
$\sqrt[3]{d}$ reduces to an element whose cube is $d$.  Since $q\nmid
d$, this element is non-zero.  Thus $d\in(\mathbb F_q^\times)^3$.
\end{proof}

\begin{proof}[Proof of Theorem~\ref{thm:main}]
It is enough to treat $0<\varepsilon<1$.  Put
$\delta=\varepsilon/4$.  Choose $T>0$ so that
\begin{equation}\label{eq:T}
  VW^3e^{-2T}<\delta.
\end{equation}
Since $H<\infty$, there is a finite set $D\subset\C$ satisfying
\begin{equation}\label{eq:D}
  Ve^{8(T+1)}(H^3-H_D^3)<\delta.
\end{equation}

Let
\[
 K=\mathbb Q\bigl(\zeta_3,\sqrt[3]{d}:d\in D\bigr).
\]
By Lemma~\ref{lem:splitting}, this is a finite Galois extension, and every
prime $q\nmid3\prod_{d\in D}d$ that splits completely in $K$ satisfies
$q\equiv1\pmod3$, and every $d\in D$ is a non-zero cube modulo $q$.
By the Chebotarev density theorem
\cite[Chapter~VII, Theorem~13.4]{Neukirch1999}, the primes that split
completely in $K$ have Dirichlet density $1/[K:\mathbb Q]>0$.
Removing the finitely many primes dividing $3\prod_{d\in D}d$ does not
change this density.  In particular, the sum of the reciprocals of the
remaining primes diverges.  We may consequently choose a finite set $Q$
of them for which
\begin{equation}\label{eq:Q}
  T\leq\sum_{q\in Q}\frac1q\leq T+1.
\end{equation}
The upper bound follows by adding the primes one at a time, since $q\geq7$.

For each $q\in Q$, choose a non-cube $u_q\in\Fq^\times$.  Put
$M=\prod_{q\in Q}q$, and choose by the Chinese remainder theorem a class
$a\pmod M$ such that $a\equiv u_q\pmod q$ for every $q\in Q$.  Thus
$(a,M)=1$.  Finally, choose a finite set $E\subset\C$ containing $D$
and so large that
\begin{equation}\label{eq:E}
  M(W^3-W_E^3)<\delta.
\end{equation}

We count ordered representations by exactly three cube-full integers.  First
suppose $\mathbf d\in D^3$.  Every $d_j$ is a non-zero cube modulo every
$q\in Q$, so a change of variables and Lemma~\ref{lem:three-cubes} give
\[
 \rho_q(a;\mathbf d)\leq1-\frac2q.
\]
It follows from \eqref{eq:Q} that
\[
 \prod_{q\in Q}\rho_q(a;\mathbf d)
 \leq\exp\left(-2\sum_{q\in Q}\frac1q\right)\leq e^{-2T}.
\]
Summing Proposition~\ref{prop:lattice} over $D^3$, and using
\eqref{eq:T}, we obtain
\begin{equation}\label{eq:small}
 \sum_{\mathbf d\in D^3}R_{\mathbf d}(X;a,M)
 \leq\delta\frac XM+O_{M,D}(X^{2/3}).
\end{equation}

For $\mathbf d\in E^3\setminus D^3$, Lemma~\ref{lem:uniform} and
\eqref{eq:Q} instead give
\begin{align*}
 \prod_{q\in Q}\rho_q(a;\mathbf d)
 &\leq \exp\left(8\sum_{q\in Q}\frac1q\right)
 3^{\omega(d_1)+\omega(d_2)+\omega(d_3)}\\
 &\leq e^{8(T+1)}
 3^{\omega(d_1)+\omega(d_2)+\omega(d_3)}.
\end{align*}
Here we used
\[
 \sum_{q\in Q}\mathbf1_{q\mid d_j}\leq\omega(d_j)
 \qquad (1\leq j\leq3).
\]
Moreover, the sum of the corresponding weights is
\begin{align*}
 &\sum_{\mathbf d\in E^3\setminus D^3}
 3^{\omega(d_1)+\omega(d_2)+\omega(d_3)}
 (d_1d_2d_3)^{-1/3}\\
 &\hspace{35mm}=H_E^3-H_D^3\leq H^3-H_D^3.
\end{align*}
Proposition~\ref{prop:lattice} and \eqref{eq:D} imply
\begin{equation}\label{eq:middle}
 \sum_{\mathbf d\in E^3\setminus D^3}R_{\mathbf d}(X;a,M)
 \leq\delta\frac XM+O_{M,E}(X^{2/3}).
\end{equation}

It remains to remove the truncation.  For any fixed $\mathbf d$, the number
of $\mathbf x\in\N^3$ satisfying
$d_1x_1^3+d_2x_2^3+d_3x_3^3\leq X$ is at most
$X(d_1d_2d_3)^{-1/3}$.  Hence all triples with at least one 4-full part
outside $E$, even without a congruence condition, contribute at most
\begin{equation}\label{eq:tail}
 X(W^3-W_E^3)<\delta\frac XM.
\end{equation}
Combining \eqref{eq:small}, \eqref{eq:middle}, and \eqref{eq:tail}, the number
of ordered three-term representations in the progression is at most
\[
 3\delta\frac XM+O_{M,E}(X^{2/3}).
\]

The number of distinct represented values is no greater than the number of
ordered representations.  We conclude that
\[
 \limsup_{X\to\infty}\frac M X\R_3(X;a,M)
 \leq3\delta<\varepsilon,
\]
as required.
\end{proof}

\section{A remark on the counting argument}

We finish here by explaining why the elementary counting argument does not prove
the theorem.  Its order of magnitude is exactly $X$, as the following
calculation shows.

Let $\F_r$ be the set of positive $r$-full integers.  Every
$n\in\F_r$ has a unique expression
\begin{equation}\label{eq:r-part}
 n=x^r\prod_{s=1}^{r-1}b_s^{r+s},
\end{equation}
where the $b_s$ are squarefree and pairwise coprime.  If $\C_r$ denotes
the set of possible $(r+1)$-full parts in \eqref{eq:r-part}, then
\begin{equation}\label{eq:Wr}
W_r=\sum_{d\in\C_r}d^{-1/r}
=\prod_p\left(1+\sum_{s=1}^{r-1}p^{-1-s/r}\right)<\infty.
\end{equation}
Indeed, a prime whose exponent is congruent to $s\pmod r$, with
$1\leq s<r$, contributes one factor $p^{r+s}$ to the
$(r+1)$-full part; the remaining exponent is a non-negative multiple of
$r$.  This is the same prime-by-prime argument as in
Lemma~\ref{lem:canonical}, and it proves existence and uniqueness.

\begin{proposition}\label{prop:critical}
Let $T_r(X)$ be the number of ordered $r$-tuples
$(n_1,\ldots,n_r)\in\F_r^r$ with $n_1+\cdots+n_r\leq X$.  Then
\[
 T_r(X)\sim \Gamma(1+1/r)^rW_r^rX.
\]
The number of unordered multisets satisfying the same conditions is
asymptotic to
\[
 \frac{\Gamma(1+1/r)^r}{r!}W_r^rX.
\]
\end{proposition}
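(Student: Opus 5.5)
We use the parametrisation \eqref{eq:r-part}: every $n\in\F_r$ is uniquely $dx^r$ with $d\in\C_r$ and $x\in\N$. Hence $T_r(X)=\sum_{\mathbf d\in\C_r^r}N_{\mathbf d}(X)$, where
\[
N_{\mathbf d}(X)=\#\{\mathbf x\in\N^r:d_1x_1^r+\cdots+d_rx_r^r\leq X\}.
\]
For fixed $\mathbf d$ the region $\{\mathbf t\in\mathbb R_{\geq0}^r:\sum d_jt_j^r\leq X\}$ has volume $\Gamma(1+1/r)^rX(d_1\cdots d_r)^{-1/r}$. This is the Dirichlet integral: the volume of $\{\sum t_j^r\leq1,\ t_j\geq0\}$ is $\Gamma(1+1/r)^r/\Gamma(2)$. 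Since the region is a dilate of a fixed region by $X^{1/r}$, the same lattice-point counting as in Proposition~\ref{prop:lattice} (Davenport's principle, without the congruence condition) gives
\[
N_{\mathbf d}(X)=\Gamma(1+1/r)^rX(d_1\cdots d_r)^{-1/r}+O_{\mathbf d}(X^{(r-1)/r}).
\]
This gives the expected main term for each fixed $\mathbf d$.

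To sum over $\mathbf d$ we need a uniform domination. We use the bound already used in \eqref{eq:tail}: positive lattice points $\mathbf x$ in the region correspond injectively to unit cubes $\prod_j[x_j-1,x_j]$ contained in the region, since the region is decreasing in each coordinate. Therefore $N_{\mathbf d}(X)\leq\Gamma(1+1/r)^rX(d_1\cdots d_r)^{-1/r}\leq X(d_1\cdots d_r)^{-1/r}$ for every $\mathbf d$ and $X$. Now fix $\eta>0$ and a finite $E\subset\C_r$ with $W_r^r-W_E^r<\eta$, where $W_E=\sum_{d\in E}d^{-1/r}$; this is possible by \eqref{eq:Wr}. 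Split the sum into $\mathbf d\in E^r$ and the rest. The first part is
\[
\Gamma(1+1/r)^rW_E^rX+O_E(X^{(r-1)/r}),
\]
and the second part lies between $0$ and $\eta X$. Dividing by $X$ and letting $X\to\infty$, both $\limsup$ and $\liminf$ of $T_r(X)/X$ lie within $\Gamma(1+1/r)^r\eta+\eta$ of $\Gamma(1+1/r)^rW_r^r$. Letting $\eta\to0$ proves the first claim. Equivalently, this is dominated convergence for the series $\sum_{\mathbf d}N_{\mathbf d}(X)/X$ with the summable majorant $(d_1\cdots d_r)^{-1/r}$.

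For multisets, let $U_r(X)$ denote the number of unordered multisets. A multiset with pairwise distinct entries corresponds to exactly $r!$ ordered tuples, so $r!\,U_r(X)=T_r(X)+O(\#\{\text{tuples with }n_i=n_j\text{ for some }i<j\})$. By symmetry, a tuple with $n_1=n_2$ is determined by $r-1$ free $r$-full entries, each at most $X$. Using the one-variable count $\#(\F_r\cap[1,X])\leq W_rX^{1/r}$, the analogue of \eqref{eq:Fcount}, such tuples number at most $(W_rX^{1/r})^{r-1}=O(X^{(r-1)/r})=o(X)$. Hence $U_r(X)\sim T_r(X)/r!$.

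The only delicate step is the interchange of the limit $X\to\infty$ with the infinite sum over $\mathbf d$, because the error terms $O_{\mathbf d}$ are not uniform in $\mathbf d$. The truncation at $E$ combined with the uniform upper bound $N_{\mathbf d}(X)\leq X(d_1\cdots d_r)^{-1/r}$ handles this, exactly as in the proof of Theorem~\ref{thm:main}. No lower-bound uniformity is required, since the omitted terms are nonnegative.
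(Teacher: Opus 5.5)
Your proof is correct and follows the paper's argument: you truncate to a finite set $E$ of $(r+1)$-full parts, apply lattice-point counting for each fixed $\mathbf d\in E^r$, bound the remaining tuples uniformly by $X(W_r^r-W_E^r)$ and let $E$ exhaust $\C_r$, and then discard tuples with a repeated entry using the $O(X^{1/r})$ count of $r$-full integers. Your unit-cube justification gives a slightly sharper uniform bound than needed, since the trivial bound $\prod_j (X/d_j)^{1/r}$ already suffices. Your explicit $\eta$-bookkeeping simply spells out the paper's ``letting $E$ exhaust $\C_r$''.
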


\begin{proof}
Fix a finite set $E\subset\C_r$.  For fixed parts
$d_1,\ldots,d_r\in E$, lattice-point counting gives
\[
 \#\{\mathbf x\in\N^r:d_1x_1^r+\cdots+d_rx_r^r\leq X\}
 \sim \Gamma(1+1/r)^rX(d_1\cdots d_r)^{-1/r}.
\]
The tuples having at least one $(r+1)$-full part outside $E$ are at most
\[
 X\bigl(W_r^r-W_{r,E}^r\bigr),
 \qquad W_{r,E}=\sum_{d\in E}d^{-1/r}.
\]
Letting $E$ exhaust $\C_r$ proves the ordered asymptotic.

By \eqref{eq:Wr}, the number of $r$-full integers up to $X$ is
$O_r(X^{1/r})$.  Ordered tuples with at least two equal entries therefore
contribute at most $O_r(X^{(r-1)/r})=o(X)$.  All other permutation orbits
have size $r!$, which proves the second assertion.
\end{proof}

Thus the elementary count is of order $X$, rather than $o(X)$.  In the
cubic proof, the saving appears only after local restrictions are imposed on
the 4-full parts.  Unfortunately, the argument does not give an equally effective
replacement when $r\geq4$.

\end{document}